\newcommand{\Z}{\ensuremath{\mathbb{Z}}}
\newcommand{\R}{\ensuremath{\mathbb{R}}}
\newcommand{\C}{\ensuremath{\mathbb{C}}}
\newcommand{\T}{\ensuremath{\mathbb{T}}}
\renewcommand{\epsilon}{\varepsilon}
\renewcommand{\geq}{\geqslant}
\renewcommand{\leq}{\leqslant}
\newtheorem{thm}{Theorem}[section]
\newtheorem{cor}[thm]{Corollary}
\newtheorem{lem}[thm]{Lemma}
\newtheorem{prop}[thm]{Proposition}
\title[Resonance of bounded isochronous oscillators]
{Resonance of bounded isochronous oscillators}
\author[D.~Rojas]{David Rojas}
\address{Departament d' Informàtica, Matemàtica Aplicada i Estadística, Universitat de Girona, 17003  Girona, Spain}
\email{david.rojas@udg.edu}
\subjclass[2010]{Primary: 34C15. Secondary: 34C10, 34D05, 34D20.}
\keywords{isochronous center, oscillator, resonance, perturbation}
\begin{document}

\begin{abstract}
An oscillator is called isochronous if all motions have a common period. When the system is forced by a time-dependent perturbation with the same period the phenomenon of resonance may appear. We give a sufficient condition on the perturbation in order that resonance occurs when the period annulus of the isochronous oscillator is bounded. In this context, resonance means that all solutions escape from the period annulus.

\end{abstract}

\maketitle

\section{Introduction} \label{sec:intro}

An oscillator with equation
\begin{equation}\label{iso}
\ddot x + V'(x) = 0
\end{equation}
is called isochronous if it only has one equilibrium point and all solutions in a neighbourhood are periodic with a fixed period, lets say $T=2\pi$. When a small periodic perturbation with the same period as the isochronous center is added to the force, the phenomenon of resonance may occur. That is, all solutions of the non-autonomous equation 
\begin{equation}\label{per}
\ddot x + V'(x) = \epsilon p(t)
\end{equation}
are unbounded for $\epsilon\neq 0$ small. In the recent years the classical theory of resonance has been extended from the linear oscillator to nonlinear isochronous oscillators. We refer for instance \cite{Bonheure,Ortega2002} for the construction of forcings and \cite{OrtRoj2019} for sufficient conditions to produce resonance. Also~\cite{AloOrt98,Liu99} where the authors treated the specific case of the asymmetric oscillator.

Until now the oscillators treated have been defined over the whole real line or they have had an asymptote. In the first case the potential generates a global center in $\R^2$ whereas in the second case the center is defined in a semi-plane. In both situations the center is global. That is, all solutions are well-defined and $2\pi$-periodic.  
%The conditions in~\cite{OrtRoj2019} on the isochronous potential are either $V\in C^2(\R)$ with $V''$ bounded over the whole real line, or $V\in C^2(\alpha,+\infty)$ with $V$ having an asymptote at $x=\alpha$. 
In this work we treat the case when the isochronous oscillator is bounded (see Figure~\ref{fig1}.) In general, a planar bounded center is usually confined inside a homoclinic or heteroclinic connection. In particular, an equilibrium of the equation~\eqref{iso} can be found at the outer boundary of the period annulus. Clearly, this situation is incompatible with isochronicity. However, bounded isochronous centers can be constructed using a singular potential function. In order to differ from the second case mentioned above, the singularity must be integrable. That is, equation~\eqref{iso} is singular but the Hamiltonian $H(x,\dot{x})=\frac{1}{2}\dot{x}^2+V(x)$ is not (see \cite{Lazer,Pedro} and references therein.) If this is the case, equation~\eqref{iso} is said to have a weak singularity. 
Our main result shows that the resonance condition given in~\cite{OrtRoj2019} for global centers also produces resonance for the bounded isochronous oscillator. The main difference is that in the present situation resonance is understood as the escape from the period annulus. More precisely, for a given non-empty compact subset $\mathcal{K}$ inside the bounded period annulus almost all solutions inside $\mathcal{K}$ leave the compact subset at some time if $\epsilon\neq 0$ is small enough. We point out the difference between the proofs in~\cite{OrtRoj2019} and the one in the present paper, since the Second Massera's theorem used does not apply in the bounded scenario.

At first glance bounded isochronous oscillators seem rare, but they are not. The characterization of isochronous potentials given by Urabe in~\cite{Urabe61,Urabe62} shows, roughly speaking, that there are as many bounded isochronous as odd functions $S\in C(I)\cap C^1(I\setminus\{0\})$, where $I=(\alpha,\beta)$, $\alpha<0<\beta$, satisfying $S(\alpha)+1=0$. Among all of them, the simplest one is constructed by taking $S(X)=X$, which gives the bounded potential isochronous center
\begin{equation}\label{iso_example}
\ddot{x}+1-\frac{1}{\sqrt{2x+1}}=0, \ x\in(-\tfrac{1}{2},\tfrac{3}{2}).
\end{equation}
In general, to check the resonance condition may be difficult. In the case of the previous potential, we have integrated the equation in terms of an implicit identity, which keeps close similarity to the Kepler equation, and obtained the resonance condition for periodic lineal perturbations.

The rest of the paper is organized as follows. The precise statements of the results are found in Section~\ref{sec:2}.  In Section~\ref{sec:3} we present the proof of the sufficiency of the resonance condition for bounded isochronous potentials. Section~\ref{sec:4} is dedicated to illustrate the construction of such isochronous centers using the theory of Urabe, and in Section~\ref{sec:5} we study in detail the particular system~\eqref{iso_example}. The paper is finished with some comments about the behaviour of the solutions near the boundary of the period annulus.

\section{Statement of the result}\label{sec:2}
Let us consider a potential $V\in C^2(I)$ defined in an interval $I=(\alpha,\beta)$ with $-\infty<\alpha<0<\beta<+\infty$ satisfying
\[
V(0)=V'(0)=0 \text{ and } xV'(x)>0 \text{ if }x\neq 0.
\]
In addition we assume that 
\[
\lim_{x\rightarrow\alpha^+}V(x)=\lim_{x\rightarrow\beta^-}V(x)=\overline{V}<+\infty, \text{ and }\lim_{x\rightarrow\alpha^+}V'(x)=-\infty.
\]
Under the hypothesis above, the equation~\eqref{iso} has a center at the origin with a bounded period annulus, namely $\mathscr{P}$ (see Figure~\ref{fig1}.) Solutions with initial conditions outside the period annulus are not globally defined. Indeed, those solutions reach the singularity $x=\alpha$ in finite time.

\begin{figure}
    \centering
    \includegraphics[scale=1]{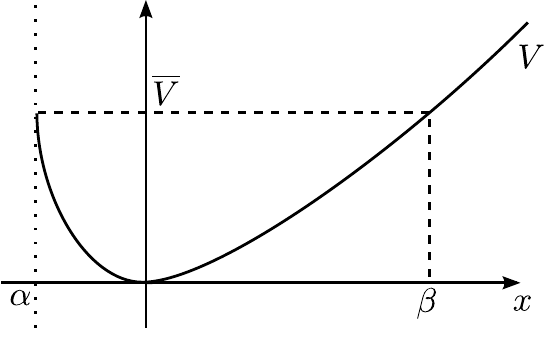} \hspace*{1cm}
    \includegraphics[scale=1]{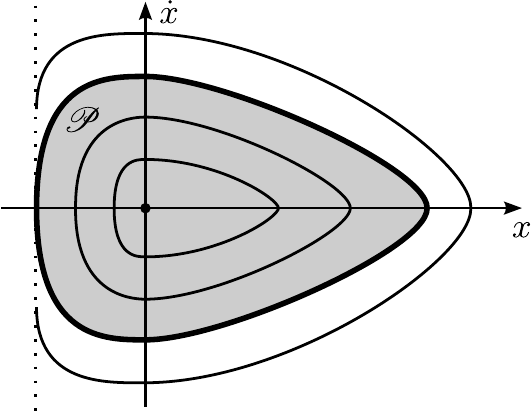}
    \caption{\label{fig1}On the left, potential function with a weak singularity at $x=\alpha$. On the right, the phase portrait of the potential system. The grey region corresponds to the period annulus. Its outer boundary is emphasized in bold.}
\end{figure}

In the forthcoming we assume that all globally-defined solutions of equation~\eqref{iso} (that is, those inside the period annulus) are $2\pi$-periodic. For every $\epsilon\in\R$, let us denote by $\varphi_{\epsilon}(t,\textbf{x})$ the solution of the first order system associated to the perturbed equation~\eqref{per} with initial condition $\textbf{x}=(x,\dot x)\in\mathscr P$. Throughout this paper we shall use the notation 
\[
\varphi_{\epsilon}(t,\mathcal U)\!:=\{\varphi_{\epsilon}(t,\textbf{x}): \textbf{x}\in\mathcal U\}
\] 
to refer the set of solutions of equation~\eqref{per} with initial conditions in the subset $\mathcal U\subset \mathscr P$. 

Let $\mathcal C=(\R/2\pi\Z)\times[0,+\infty)$ be a cylinder with coordinates $(\theta,r)$ and denote by $\phi(t,r)$ the solution of equation~\eqref{iso} with initial conditions $x(0)=r$ and $\dot{x}(0)=0$. That is, $\phi(t,r)$ is the first component of the solution $\varphi_0(t,(r,0))$ of the unperturbed equation.
The complex-valued solution of the variational equation
\[
\ddot{y}+V''(\phi(t,r))y=0,\ y(0)=1,\ \dot{y}(0)=i,
\]
is denoted by $\psi(t,r)$. We define the function
\[
\Phi_p:\mathcal C\rightarrow\C, \ \Phi_p(\theta,r)\!:=\frac{1}{2\pi}\int_0^{2\pi}p(t-\theta)\psi(t,r)dt.
\]
The resonance result for oscillators with a weak singularity states as follows. 

\begin{thm}\label{teorema}
Assume that $V$ satisfies the previous conditions and the condition
\begin{equation}\label{rc}
\inf_{\mathcal C}|\Phi_p(\theta,r)|>0
\end{equation}
holds for some $p\in L^1(\T)$. Then for every compact subset $\mathcal{K}\subset\mathscr{P}$ with non-empty interior and $d>0$ there exists $\varepsilon^*>0$ such that if $|\epsilon|<\epsilon^*$ then every open ball of initial conditions $B\subset\mathcal{K}$ with diameter $d$ satisfies $\varphi_{\epsilon}(t,B)\not\subset\mathcal{K}$ for some time $t>0$.
\end{thm}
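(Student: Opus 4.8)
The plan is to reduce the statement to a study of the time-$2\pi$ (stroboscopic) map $P_\epsilon:=\varphi_\epsilon(2\pi,\cdot)$ and its first-order dependence on $\epsilon$. Since every orbit in $\mathscr P$ is $2\pi$-periodic, $P_0=\mathrm{id}$, and differentiating in $\epsilon$ the variation-of-constants formula along the unperturbed orbit through a point $\mathbf x$ gives $P_\epsilon(\mathbf x)=\mathbf x+\epsilon\,\mathbf D(\mathbf x)+o(\epsilon)$ for a smooth field $\mathbf D$ on $\mathscr P$. The first fact I would establish is that for an isochronous center the monodromy of the variational equation $\ddot y+V''(\phi(t,r))y=0$ is the identity: both $\partial_t\phi(\cdot,r)$ and $\partial_r\phi(\cdot,r)$ are $2\pi$-periodic solutions (the latter precisely because the period does not depend on $r$), so $\psi(\cdot,r)$ is $2\pi$-periodic and the fundamental matrix satisfies $M(2\pi,r)=I$. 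Using this, the displacement integral collapses to $\mathbf D(\mathbf x)=2\pi\,M(\theta,r)(-\im\Phi_p,\re\Phi_p)^{\mathsf T}$, where $(\theta,r)$ are the phase/orbit coordinates of $\mathbf x$. Because $M$ has determinant $1$ and is bounded on the compact set $\mathcal K$, the resonance hypothesis $\inf_{\mathcal C}|\Phi_p|>0$ translates into a uniform lower bound $|\mathbf D|\geq c>0$ on $\mathcal K$, and in fact on all of $\mathscr P$ (including the center, where $\mathbf D$ need not vanish since the origin is not an equilibrium of the forced equation).

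Next I would exploit two structural features of $\mathbf D$. First, the forced system $\dot x=y,\ \dot y=-V'(x)+\epsilon p(t)$ has zero divergence, so its flow is area preserving and $P_\epsilon$ is an area-preserving map; expanding $\det DP_\epsilon=1+\epsilon\,\opdiv\mathbf D+o(\epsilon)$ forces $\opdiv\mathbf D=0$, hence $\mathbf D=\nabla^{\perp}K$ for a stream function $K$ on $\mathscr P$. Second, $\mathbf D$ is nowhere zero on the disk-like region filled by $\mathscr P$ together with its center. Combining these, $K$ has no critical point and, by the Poincaré index theorem, $\mathbf D$ admits no closed orbit: a periodic orbit of a planar flow has index $+1$ and would enclose a zero of the field, which does not exist. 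Equivalently, every level curve of $K$ is a non-closed arc meeting the boundary of any compact subset. Parametrizing by arc length along these level curves, whose length inside $\mathcal K$ is bounded while the speed satisfies $|\mathbf D|\geq c$, the orbit of $\mathbf D$ through any point of $\mathcal K$ leaves $\mathcal K$ in a uniformly bounded time $T_0=T_0(\mathcal K)$.

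Finally I would transfer this qualitative escape from the averaged autonomous field $\mathbf D$ to the genuine iterates of $P_\epsilon$ via the averaging principle: over $n$ steps with $n\epsilon\leq T_0$, the orbit $P_\epsilon^{\,n}(\mathbf x)$ stays $O(\epsilon)$-close to the time-$(n\epsilon)$ flow of $\mathbf D$. A convenient way to organize the estimate is to use $K$ as a slow variable: since $\mathbf D=\nabla^{\perp}K$ one has $K(P_\epsilon\mathbf x)-K(\mathbf x)=\epsilon\,\nabla K\cdot\mathbf D+O(\epsilon^2)=O(\epsilon^2)$ per step, so $K$ is nearly conserved on the scale $1/\epsilon$, while the complementary arc-length coordinate increases by at least $c\epsilon$ per step. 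As the level curves of $K$ have bounded length in $\mathcal K$, the iterates must leave $\mathcal K$ after $O(1/\epsilon)$ steps once $\epsilon$ is small. For points whose averaged orbit crosses $\partial\mathcal K$ transversally this passes to the true dynamics; such points are all of $\mathcal K$ except a closed null set of orbits that merely graze $\partial\mathcal K$, and this exceptional set is exactly why the conclusion is stated for an open ball $B$ of diameter $d$ (equivalently, for almost every solution) rather than for every orbit. Since $B$ has nonempty interior it contains non-grazing points whose images leave $\mathcal K$, giving $\varphi_\epsilon(t,B)\not\subset\mathcal K$ for some $t>0$.

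I expect the main obstacle to be exactly this last transfer. In the global setting of~\cite{OrtRoj2019} one concludes by contradiction through the Second Massera Theorem, since $P_\epsilon$ has no fixed point and hence no periodic solution can exist; but here solutions may reach the weak singularity in finite time, so the phase space is not invariant and Massera's theorem is unavailable. The substitute is the combination of area preservation with the absence of closed orbits of $\mathbf D$, and the technical heart is the uniform, long-time ($O(1/\epsilon)$) control of the averaging error together with a careful treatment of the grazing set; keeping all estimates uniform over balls of a fixed diameter $d$ inside $\mathcal K$ is what fixes the dependence $\epsilon^*=\epsilon^*(\mathcal K,d)$.
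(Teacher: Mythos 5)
Your route is genuinely different from the paper's. The paper proves the theorem in two soft steps: (i) a Fredholm-alternative argument showing that, under~\eqref{rc}, equation~\eqref{per} has no $2\pi$-periodic solution inside any compact $\mathcal{K}\subset\mathscr{P}$ for $\epsilon\neq 0$ small (Proposition~\ref{proposition}); and (ii) a contradiction argument: if some ball $B$ of diameter $d$ satisfied $\varphi_{\epsilon}(t,B)\subset\mathcal{K}$ for all $t>0$, then for $\epsilon$ small consecutive Poincaré iterates of $B$ overlap, so the union of the iterates generates (after passing to the interior of its closure, which is invariant by a result of Ortega, and filling the holes) a simply connected, finite-measure, $P_\epsilon$-invariant open set; Montgomery's fixed point theorem for area-preserving homeomorphisms then yields a fixed point of $P_\epsilon$, i.e.\ a $2\pi$-periodic solution in $\mathcal{K}$, contradicting (i). Your first step is sound and is in fact an alternative proof of (i): the expansion $P_\epsilon=\mathrm{id}+\epsilon\mathbf{D}+o(\epsilon)$, the identity $M(2\pi,r)=I$ coming from isochronicity, and your formula $\mathbf{D}=2\pi M(\theta,r)(-\im\Phi_p,\re\Phi_p)^{\mathsf T}$ are all correct, and on a compact $\mathcal{K}$ they give $|P_\epsilon(\mathbf{x})-\mathbf{x}|\geq |\epsilon| c-o(\epsilon)>0$, so $P_\epsilon$ has no fixed point in $\mathcal{K}$. (One caveat: the uniform bound $|\mathbf{D}|\geq c>0$ holds on compact subsets only, since $\|M(\theta,r)^{-1}\|$ may blow up near $\partial\mathscr{P}$; on all of $\mathscr{P}$ you only get pointwise nonvanishing, which is, fortunately, all that your index argument needs.)

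The genuine gaps are in the second half, where you replace the paper's fixed point theorem by an averaging argument over $O(1/\epsilon)$ iterates. First, regularity: the standing hypothesis is $V\in C^2$, so the unperturbed flow is only $C^1$ in initial conditions, $M(\theta,r)$ is only continuous, and hence $\mathbf{D}$ is continuous, not smooth. The flow of $\mathbf{D}$ and its uniqueness, the Gronwall comparison of $P_\epsilon^n$ with that flow, Poincaré--Bendixson, and arc-length parametrization of orbits are therefore not available as stated. What does survive is the stream function: $\opdiv\mathbf{D}=0$ holds distributionally because each $P_\epsilon$ preserves area, so there is $K\in C^1$ with $\nabla^{\perp}K=\mathbf{D}$ on the simply connected set $\mathscr{P}\cup\{(0,0)\}$, and closed level curves are excluded by a maximum argument; but then all of your dynamical steps must be recast in terms of level curves of a merely $C^1$ function, which you do not do. Second, the key quantitative estimates are unavailable or unproved: $K(P_\epsilon\mathbf{x})-K(\mathbf{x})=O(\epsilon^2)$ would require $K\in C^2$ and a second-order expansion of $P_\epsilon$ in $\epsilon$, i.e.\ essentially $V\in C^3$; under the actual hypotheses one only gets $o(\epsilon)$ per step (still summable to $o(1)$ over $O(1/\epsilon)$ steps, so this part is repairable). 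More seriously, the claim that the complementary arc-length coordinate increases by at least $c\epsilon$ per step presupposes a uniform transverse (flow-box) coordinate system along level curves that themselves drift under the iteration; constructing it and propagating the error over $O(1/\epsilon)$ iterates is exactly the technical heart you acknowledge, and it is asserted rather than carried out. Third, the closing discussion of grazing orbits is both unsupported (no reason is given why they should form a null set) and unnecessary: fixing a compact neighbourhood $\mathcal{K}'\subset\mathscr{P}$ of $\mathcal{K}$, every level curve starting in $\mathcal{K}$ leaves $\mathcal{K}'$ --- hence reaches a definite distance from $\mathcal{K}$ --- within a uniformly bounded arc length (finiteness follows from the absence of closed level curves, uniformity from upper semicontinuity of the escape length plus compactness), so no exceptional set arises and your argument, once completed, would show that every point of $\mathcal{K}$, not merely every ball, escapes. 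Relatedly, you misread why the statement involves balls of diameter $d$: the ball is not there to excise a grazing null set; it is what the paper's construction needs so that consecutive iterates of $B$ intersect, making the invariant open set connected, of finite measure, and amenable to Montgomery's theorem.
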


As an example of a periodically perturbed bounded isochronous oscillator we consider the equation
\begin{equation}\label{per_example}
\ddot{x}+1-\frac{1}{\sqrt{2x+1}}=\epsilon p(t).
\end{equation}
The following result illustrates the application of the theorem to that equation when the forcing is a linear trigonometric function.

\begin{cor}\label{corollary}
Assuming that $p(t)$ is a trigonometric function of the form
\[
p(t)=a_0+a_1\cos t + b_1\sin t,
\]
the equation~\eqref{per_example} is resonant on compact subsets if
\begin{equation}\label{rc_example}
a_1^2+b_1^2>\frac{9a_0^2}{4(J_1(1)-J_2(1))^2}.
\end{equation}
\end{cor}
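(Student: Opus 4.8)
The plan is to verify the resonance condition \eqref{rc} of Theorem~\ref{teorema} for the specific oscillator \eqref{iso_example} and the given trigonometric forcing, and then invoke the theorem. So the real content of the corollary is the computation of $\Phi_p(\theta,r)$ for this system and the reduction of $\inf_{\mathcal C}|\Phi_p|>0$ to the inequality \eqref{rc_example}. First I would compute the complex variational solution $\psi(t,r)$ for the potential in \eqref{iso_example}. Because the system is isochronous with period $2\pi$, the unperturbed solution $\phi(t,r)$ is $2\pi$-periodic, and the variational equation $\ddot y+V''(\phi(t,r))y=0$ has $\dot\phi(t,r)$ as one (real, periodic) solution. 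The standard way to get the complex solution $\psi(t,r)$ with $\psi(0)=1,\ \dot\psi(0)=i$ is to build a second independent solution by reduction of order and combine it with $\dot\phi$; isochronicity forces $\psi$ to be of the form $e^{it}$ times a $2\pi$-periodic factor, which is what makes the time-average in $\Phi_p$ tractable.

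Next I would insert the Fourier decomposition of $p$ into the defining integral. Writing $p(t-\theta)=a_0+a_1\cos(t-\theta)+b_1\sin(t-\theta)$ and using the convolution structure of $\Phi_p(\theta,r)=\frac{1}{2\pi}\int_0^{2\pi}p(t-\theta)\psi(t,r)\,dt$, only the Fourier modes of $\psi(t,r)$ at frequencies $0$ and $\pm1$ survive after integration. The constant term $a_0$ pairs with the mean of $\psi$ over a period, while the first harmonics $a_1,b_1$ pair with the $\pm1$ Fourier coefficients of $\psi$. This is where the Bessel functions should enter: for the explicit potential \eqref{iso_example}, whose implicit integral the paper already compares to the Kepler equation, the relevant Fourier coefficients of $\psi(t,r)$ (equivalently of the Kepler-type solution and its derivatives) are expressible through $J_1(1)$ and $J_2(1)$. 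The quantities $J_1(1)-J_2(1)$ in \eqref{rc_example} strongly suggest that one of the surviving coefficients equals a constant multiple of $J_1(1)-J_2(1)$, independent of $r$, while the $a_0$-mode contributes a term carrying the factor $\tfrac{3}{2}$ (or its reciprocal).

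I would then bound $|\Phi_p(\theta,r)|$ from below uniformly in $(\theta,r)\in\mathcal C$. Assembling the computed coefficients, $|\Phi_p|^2$ should reduce to a sum of a term proportional to $a_1^2+b_1^2$ and a term proportional to $a_0^2$, with the cross terms either vanishing or controllable, so that $\inf_{\mathcal C}|\Phi_p|>0$ becomes exactly the strict inequality $a_1^2+b_1^2>\tfrac{9a_0^2}{4(J_1(1)-J_2(1))^2}$. The factor $9/4$ is consistent with a $3/2$ appearing from the geometry of the interval $(-\tfrac12,\tfrac32)$ in \eqref{iso_example}. Concretely, writing $\Phi_p$ as $A(r)a_0+e^{i\theta}\bigl(B(r)a_1+\cdots\bigr)$-type expression and minimizing over $\theta$ should give the threshold; the condition \eqref{rc} holds iff the harmonic part dominates the constant part uniformly in $r$, which forces the quotient inequality.

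The main obstacle I expect is the explicit evaluation of $\psi(t,r)$ and its Fourier coefficients, i.e.\ solving the variational equation for this singular potential and extracting the Bessel-function values. Because the unperturbed dynamics is given only implicitly (a Kepler-like relation between time and position), one cannot write $\phi(t,r)$ in closed form; instead the Fourier coefficients must be evaluated through the Kepler parametrization, where integrals of the form $\frac{1}{2\pi}\int_0^{2\pi}\cos t\,\cos(\text{(something)})\,dt$ produce Bessel functions via their classical integral representation $J_n(z)=\frac{1}{2\pi}\int_0^{2\pi}\cos(n\tau-z\sin\tau)\,d\tau$. Making this change of variables cleanly, confirming that the $r$-dependence drops out (so that the infimum over the annulus is attained and positive), and pinning down the numerical constant $\tfrac{9}{4(J_1(1)-J_2(1))^2}$ will be the delicate part; once $\Phi_p$ is in the stated closed form, the reduction to \eqref{rc_example} and the appeal to Theorem~\ref{teorema} are routine.
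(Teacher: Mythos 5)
Your overall strategy coincides with the paper's: pass to the Kepler-type parametrization of the unperturbed flow, compute the $2\pi$-periodic complex solution $\psi(t,r)$, Fourier-expand so that only the modes $0,\pm1$ of $\psi$ pair with $p$, bound $|\Phi_p|$ below uniformly on the cylinder, and invoke Theorem~\ref{teorema}. The paper indeed obtains, via Urabe/polar coordinates and the identity $\theta(t,r)+r\sin(\theta(t,r))=-t$, the closed form
\[
\psi(t,r)=r+\cos(\theta(t,r))-(r+1)\sin(\theta(t,r))\,i,
\]
so up to this point your plan is the paper's proof.

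However, there is a genuine gap at exactly the step you flag as delicate, and your structural prediction there is wrong: the surviving Fourier coefficients of $\psi$ are \emph{not} $r$-independent constants proportional to $J_1(1)-J_2(1)$. They are genuinely $r$-dependent; with $d_{\pm}(r)=\tfrac12(c_1(r)\pm c_{-1}(r))$ one finds
\[
c_0(r)=\tfrac32 r,\qquad d_{+}(r)=\tfrac{1}{r}J_1(r)-J_2(r),\qquad d_{-}(r)=-\tfrac{1}{r}J_1(r),
\]
with Bessel functions evaluated at $r$, not at $1$. The constants in \eqref{rc_example} emerge only after a further monotonicity argument: one checks that $\frac{d}{dr}\bigl(\frac{1}{r}J_1(r)-J_2(r)\bigr)=\frac{(2-r^2)J_1(r)}{r^2}-\frac{J_0(r)}{r}$ is negative on $(0,1]$, so $d_{+}$ decreases from $\tfrac12$ at $r=0$ to $J_1(1)-J_2(1)$ as $r\to1^-$, giving the uniform bounds $d_{+}(r)\geq J_1(1)-J_2(1)$, $|d_{-}(r)|\geq J_1(1)$ and $c_0(r)\leq\tfrac32$ on $r\in[0,1)$; the worst case sits at the outer boundary of the period annulus and the infimum is not attained, contrary to your parenthetical claim that the $r$-dependence drops out and the infimum is attained. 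Without this step the threshold $\frac{9a_0^2}{4(J_1(1)-J_2(1))^2}$ cannot be pinned down. A second, smaller omission: since $d_{+}\neq d_{-}$, the $(\pm1)$-modes of $\Phi_p$ trace an ellipse, not a circle, as $\theta$ varies, so $|\Phi_p|^2$ does not split into a clean sum of an $(a_1^2+b_1^2)$-term and an $a_0^2$-term; the paper instead writes $2\pi\Phi_p = w+DR(\theta)v$ with $D=\mathrm{diag}(d_{+}(r),d_{-}(r))$, $w=(a_0c_0(r),0)^T$, $v=(a_1,b_1)^T$, and uses the estimate $|D(D^{-1}w+R(\theta)v)|\geq (J_1(1)-J_2(1))\bigl(|v|-|D^{-1}w|\bigr)$, which is precisely where the factor $\frac{3|a_0|}{2(J_1(1)-J_2(1))}$, and hence condition \eqref{rc_example}, comes from.
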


In the previous statement, resonance on compact subsets is understood in the terms of Theorem~\ref{teorema}. Moreover, $J_{\nu}(x)$ stands for the Bessel function of the first kind (see for instance~\cite{AbrSte65}.)

\section{Proof of Theorem~\ref{teorema}}\label{sec:3}

Before starting with the proof of the result itself, we outline the two key ideas that are behind it. The first part follows the proof of Theorem A in~\cite{OrtRoj2019} concerning the non-existence of $2\pi$-periodic solutions of equation~\eqref{per}. In the mentioned reference, the potential is defined in the whole real line whereas the potential we are concerning with reaches a singularity at $x=\alpha>-\infty$. For this reason, the result we present in Proposition~\ref{proposition} is stated in terms of compact subsets $\mathcal{K}\subset\mathscr{P}$ instead of the whole period annulus. That is, equation~\eqref{per} has no $2\pi$-periodic orbits inside any compact subset of $\mathscr{P}$ if condition~\eqref{rc} is fulfilled. The second part of the proof uses a Fixed Point theorem due to Montgomery. Let $\Delta\subset\R^2$ be a simply connected open subset with finite measure and $h:\Delta\rightarrow\Delta$ an orientation preserving homeomorphism.

\begin{thm}[Montgomery]
If $h$ is area preserving then it has a fixed point.
\end{thm}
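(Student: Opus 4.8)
The plan is to argue by contradiction, combining two classical ingredients that split the hypotheses cleanly: Poincaré's recurrence theorem will use that $h$ is area preserving and that $\Delta$ has finite measure, while Brouwer's plane translation theorem will use that $h$ is an orientation preserving homeomorphism of a simply connected planar domain. So suppose $h$ has no fixed point. Since $\Delta$ is an open, connected and simply connected proper subset of the plane, the Riemann mapping theorem provides a homeomorphism $\Phi\colon\Delta\to\R^2$; put $\tilde h:=\Phi\circ h\circ\Phi^{-1}$. Then $\tilde h$ is a homeomorphism of $\R^2$ with no fixed point, and it is orientation preserving because any orientation reversal of $\Phi$ is cancelled by that of $\Phi^{-1}$.

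Next I would invoke Brouwer's translation theorem in the following standard form: a fixed-point-free, orientation preserving homeomorphism of $\R^2$ has empty non-wandering set, i.e. every point is wandering. Hence every point of $\R^2$ is wandering for $\tilde h$. Being wandering is a purely topological property, invariant under the conjugacy $\Phi$, since orbits and neighbourhoods of $h$ correspond under the homeomorphism $\Phi$ to orbits and neighbourhoods of $\tilde h$. Therefore every point of $\Delta$ is wandering for $h$; equivalently, the non-wandering set $\Omega(h)$ is empty.

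On the other hand, $h$ is a measure preserving automorphism of the finite measure space $(\Delta,\mathrm{Leb})$. Applying Poincaré's recurrence theorem to a countable basis of open subsets of $\Delta$, almost every point of $\Delta$ is recurrent, and every recurrent point is non-wandering. Thus $\Omega(h)$ carries full Lebesgue measure, and in particular is non-empty, contradicting the conclusion of the previous step. This contradiction shows that $h$ must have a fixed point.

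The delicate point is not the logic but keeping track of which structure lives where: the transport map $\Phi$ is only a homeomorphism and will in general not preserve area, so the recurrence half of the argument has to be carried out on $\Delta$ with its Lebesgue measure, whereas the translation-theorem half lives on $\R^2$. The two halves meet only through the topological invariance of the wandering/non-wandering dichotomy. The genuinely deep input is Brouwer's translation theorem, which I would cite rather than reprove.
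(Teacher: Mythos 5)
Your argument is correct, and it is essentially the proof the paper points to: the paper does not prove Montgomery's theorem itself but defers to \cite[Section~3.9]{Ortega2019}, where the result is obtained by exactly this combination of Brouwer's translation theory (a fixed-point-free orientation-preserving homeomorphism of the plane has no non-wandering points) and Poincaré recurrence for a finite invariant measure, linked through the topological invariance of the wandering property under conjugation to the plane. Two cosmetic points: the Riemann mapping theorem sends $\Delta$ onto the open unit disk, not onto $\R^2$, so compose with a homeomorphism of the disk onto the plane (and note that it is the finite-measure hypothesis that guarantees $\Delta\neq\R^2$, so the theorem applies); also, the recurrence half can be made more elementary, since for any nonempty open $U\subset\Delta$ with $h^n(U)\cap U=\emptyset$ for all $n\geq 1$ the iterates $h^n(U)$ would be pairwise disjoint with equal positive area, contradicting finiteness of the measure directly, so that in fact no point of $\Delta$ is wandering.
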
  

For a proof and further details about the previous result we refer to~\cite[Section~3.9]{Ortega2019}. The proof of Theorem~\ref{teorema} will follow by showing that the Poincaré map at time $t=2\pi$ satisfies the assumptions of Montgomery's theorem on a simply connected invariant open subset generated by the Poincaré iterates of a ball that remains inside the compact subset $\mathcal{K}$ for all time. Thus a fixed point must exists inside it. That fixed point corresponds to a $2\pi$-periodic orbit and the contradiction with the first part stated above will finish the proof. The delicate part of the proof is indeed the construction of the invariant subset.

In the forthcoming two sections we give the detailed proof of the result.

\subsection{Non-existence of \texorpdfstring{$2\pi$}{2pi}-periodic orbits on compact subsets}

Next lemma contains the key technical details to prove the proposition that follows it. It can be proved by using classical techniques of differential equations. We refer to~\cite[Section 5.1]{OrtRoj2019} for the proof.

\begin{lem}
The following two properties hold for the lineal equation
\begin{equation}\label{lin_eq}
\ddot y + a(t)y = b(t),
\end{equation}
where $a\in L^{\infty}(0,2\pi)$, $b\in L^1(0,2\pi)$ and $\|a\|_{L^{\infty}(0,2\pi)}\leq A$ for some $A>0$.
\begin{enumerate}[$(a)$]
\item Let $y(t)$ be the solution of~\eqref{lin_eq} with $b\equiv 0$ and initial conditions $y(0)=1$, $\dot y(0)=i$. Then there exists $C>0$, depending only on $A$, such that $|y(t)|\leq C$ for each $t\in[0,2\pi]$.
\item Let $y(t)$ be the solution of~\eqref{lin_eq} with initial conditions $y(0)=\dot y(0)=0$. Then $|y(t)|\leq C^2\|b\|_{L^1(0,2\pi)}$ if $t\in[0,2\pi]$.
\end{enumerate}
\end{lem}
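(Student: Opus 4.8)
The plan is to treat the two parts separately: part $(a)$ by a Gronwall estimate on the associated first-order system, and part $(b)$ by a variation-of-parameters (Duhamel) representation whose kernel is controlled by the bound obtained in $(a)$. Throughout, since $a\in L^{\infty}$ and $b\in L^1$, solutions are understood in the Carathéodory sense, so that $\dot y$ is absolutely continuous and the differential identities below hold for almost every $t$; the integrated (Gronwall) form is what is actually invoked.

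For part $(a)$, I would introduce the energy $E(t):=|y(t)|^2+|\dot y(t)|^2$. Differentiating and using $\ddot y=-a(t)y$ with $a$ real gives, for a.e. $t$,
\[
E'(t)=2\re(\bar y\dot y)+2\re(\bar{\dot y}\ddot y)=2\bigl(1-a(t)\bigr)\re(\bar y\dot y).
\]
Since $|2\re(\bar y\dot y)|\le |y|^2+|\dot y|^2=E(t)$ and $|1-a(t)|\le 1+A$, we obtain $|E'(t)|\le (1+A)E(t)$ a.e. Gronwall's inequality then yields $E(t)\le E(0)\,e^{(1+A)t}$ with $E(0)=|1|^2+|i|^2=2$, whence
\[
|y(t)|\le\sqrt{E(t)}\le\sqrt 2\,e^{(1+A)\pi}=:C,\qquad t\in[0,2\pi],
\]
a constant depending only on $A$, as required.

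For part $(b)$, write the complex solution from $(a)$ as $\psi=u+iv$, so that $u$ and $v$ are the real homogeneous solutions with $(u(0),\dot u(0))=(1,0)$ and $(v(0),\dot v(0))=(0,1)$; their Wronskian is constant and equal to $1$ because the equation has no first-order term. Variation of parameters gives the Duhamel representation $y(t)=\int_0^t K(t,s)b(s)\,ds$, where $K(t,s)=v(t)u(s)-u(t)v(s)$ solves (in $t$) the homogeneous equation with $K(s,s)=0$ and $\partial_t K(s,s)=1$. The key observation is the identity $K(t,s)=\im\bigl(\psi(t)\overline{\psi(s)}\bigr)$, which immediately gives the sharp pointwise bound
\[
|K(t,s)|\le|\psi(t)|\,|\psi(s)|\le C^2,\qquad 0\le s\le t\le 2\pi,
\]
by part $(a)$. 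Hence $|y(t)|\le\int_0^t|K(t,s)|\,|b(s)|\,ds\le C^2\|b\|_{L^1(0,2\pi)}$, which is the claim.

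The computations are routine; the only genuinely delicate point is obtaining the stated constant $C^2$ rather than the $2C^2$ that the naive estimate $|K(t,s)|\le|u(s)||v(t)|+|v(s)||u(t)|$ would give. This is exactly what the identity $K(t,s)=\im(\psi(t)\overline{\psi(s)})$ fixes, by repackaging the two fundamental solutions into the single complex solution whose modulus is controlled in $(a)$. A secondary point to handle with care is justifying the energy differentiation and the Duhamel formula under the low regularity $a\in L^{\infty}$, $b\in L^1$, which is why I would phrase the Gronwall step in its integral form from the outset.
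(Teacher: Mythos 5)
Your proof is correct, and it is essentially the argument the paper has in mind: the paper does not prove the lemma inline but defers to \cite[Section 5.1]{OrtRoj2019}, where the same classical route is taken — a Gronwall-type estimate for the homogeneous complex solution, followed by variation of constants with the kernel written through that single complex solution (your identity $K(t,s)=\im\bigl(\psi(t)\overline{\psi(s)}\bigr)$ is exactly what produces the stated constant $C^2$ instead of $2C^2$). Your handling of the Carathéodory regularity and the integral form of Gronwall is also sound, so there is nothing to fix.
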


The proof of the forthcoming proposition follows the same ideas than the ones in the proof of Theorem~A in~\cite[Section 5.1]{OrtRoj2019} with some  subtle differences. However, we decided to include the details for the sake of completeness.

\begin{prop}\label{proposition}
Under the hypothesis of Theorem~\ref{teorema}, for every compact subset $\mathcal{K}\subset\mathscr{P}$ with non-empty interior there are no $2\pi$-periodic solutions of equation~\eqref{per} inside $\mathcal K$ for $\epsilon\neq 0$ small.
\end{prop}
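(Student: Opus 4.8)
The plan is to argue by contradiction, following the strategy of Theorem~A in~\cite{OrtRoj2019} but localizing every estimate to the compact subset $\mathcal K$. Suppose, on the contrary, that there is a sequence $\epsilon_n\to 0$ with $\epsilon_n\neq 0$ and, for each $n$, a $2\pi$-periodic solution $x_n(t)$ of~\eqref{per} whose orbit stays inside $\mathcal K$. Since $\mathcal K\subset\mathscr P$ is compact, the second-derivative bound $\|V''\|_{L^\infty}$ over the relevant $x$-range is finite; this is exactly what allows me to take $A:=\sup_{\mathbf x\in\mathcal K}|V''(x)|$ as the constant in the Lemma. The goal is to show that the unperturbed projection of $x_n$ must converge (along a subsequence) to some periodic orbit $\phi(t,r_*)$ of~\eqref{iso}, and then to extract a contradiction with the resonance condition~\eqref{rc} by a first-order (linearization) argument.

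The key steps, in order, are as follows. First I would write $x_n(t)=\phi(t-\theta_n,r_n)+\epsilon_n\, z_n(t)$, where $(\theta_n,r_n)$ are chosen so that the unperturbed periodic solution through the appropriate point matches $x_n$ to leading order, and $z_n$ is the remainder. By compactness of $\mathcal K$ (hence of the admissible parameters $(\theta_n,r_n)$ in the cylinder $\mathcal C$ restricted to $\mathcal K$), pass to a subsequence so that $(\theta_n,r_n)\to(\theta_*,r_*)$. Second, I would derive the equation satisfied by $z_n$: subtracting the unperturbed equation from~\eqref{per} and dividing by $\epsilon_n$ gives a linear inhomogeneous equation of the form~\eqref{lin_eq}, with $a(t)=V''(\phi(t-\theta_n,r_n))+o(1)$ (the $o(1)$ coming from the mean-value remainder $V''(\xi_n)-V''(\phi)$, which is uniformly small because $V''$ is uniformly continuous on the compact $x$-range of $\mathcal K$) and forcing $b(t)=p(t)$. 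Third, invoke part~$(b)$ of the Lemma to get the uniform bound $\|z_n\|_{\infty}\le C^2\|p\|_{L^1}$, so $z_n$ is bounded; then the periodicity of $x_n$ forces a solvability (Fredholm) condition on the limiting linear problem. Concretely, periodicity of $z_n$ up to the $o(1)$ correction, combined with part~$(a)$ giving the bounded fundamental solution $\psi$, yields in the limit that the projection of the forcing onto the kernel of the period-map must vanish — precisely $\Phi_p(\theta_*,r_*)=0$.

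The final step is to recognize that $\Phi_p(\theta_*,r_*)=0$ contradicts the standing hypothesis $\inf_{\mathcal C}|\Phi_p|>0$ from~\eqref{rc}. The mechanism is the standard one: a $2\pi$-periodic solution of the inhomogeneous variational-type equation exists only when the inhomogeneity is orthogonal to the periodic solutions of the adjoint homogeneous equation, and since the oscillator is isochronous every solution of the homogeneous variational equation is $2\pi$-periodic, so the obstruction is measured exactly by $\Phi_p$. This closes the contradiction and proves the Proposition.

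The main obstacle, as flagged in the excerpt, is the boundedness of the period annulus: unlike the global case in~\cite{OrtRoj2019}, solutions near the outer boundary reach the singularity $x=\alpha$ in finite time, so I cannot work on all of $\mathscr P$ and cannot use global a priori bounds. The role of restricting to a compact $\mathcal K$ is precisely to recover uniform control on $V''$ and on the fundamental-solution constant $C$; I must make sure the matching parameters $(\theta_n,r_n)$ stay in a compact sub-region of $\mathcal C$ bounded away from the singular boundary, so that the constant $A$ in the Lemma is genuinely uniform in $n$ and the limit $(\theta_*,r_*)$ corresponds to a bona fide periodic orbit inside $\mathscr P$ rather than an escaping one. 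Verifying this compactness of parameters — equivalently, that a periodic orbit trapped in $\mathcal K$ cannot have its unperturbed profile degenerate toward the boundary — is the delicate point that must be handled carefully before the Lemma can be applied.
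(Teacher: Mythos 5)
Your proposal follows essentially the same route as the paper's proof: contradiction along a sequence $\epsilon_n\to 0$, decomposition of each $2\pi$-periodic solution as an unperturbed orbit $\phi(t-\theta_n,r_n)$ with matched initial data plus a remainder, the a priori bounds of the Lemma, and the Fredholm alternative against the periodic variational solution $\psi(t-\theta_n,r_n)$, ending in a contradiction with~\eqref{rc}. The two points you leave open resolve exactly as the paper does: the unperturbed solutions sweep out a compact set $\tilde{\mathcal{K}}\subset\mathscr{P}$ containing $\mathcal{K}$ (the unperturbed flow preserves $\mathscr{P}$ and the orbits through the compact $\mathcal{K}$ form a compact set), so the constant $A$ is the supremum of $|V''|$ over the projection of $\tilde{\mathcal{K}}$ rather than of $\mathcal{K}$; and no subsequence limit $(\theta_*,r_*)$ is needed at all, since showing $\Phi_p(\theta_n,r_n)\to 0$ already contradicts the infimum condition~\eqref{rc} over the whole cylinder.
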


\begin{proof}
The compactness of the subset $\mathcal{K}$ together with the classical results of continuous dependence on parameters of differential equations ensures the existence of $\epsilon^{**}=\epsilon^{**}(\mathcal{K})>0$ in such a way all solutions of equation~\eqref{per} with $|\epsilon|<\epsilon^{**}$ and initial conditions inside $\mathcal{K}$ are well-defined for $t\in[0,2\pi]$. More concretely, they are all defined in an open interval $I$ containing $[0,2\pi]$. In order to show the result we take any sequence $\{\epsilon_n\}_{n\geq 0}$ with $\epsilon_n\rightarrow 0$ and satisfying $0<|\epsilon_n|<\epsilon^{**}$ for all $n\geq 0$ and assume, with the aim of reaching contradiction, that the equation~\eqref{per} with $\epsilon=\epsilon_n$ has a $2\pi$-periodic solution, namely $\textbf{x}_n(t)=(x_n(t),\dot x_n(t))$, contained inside $\mathcal{K}$. Let $\textbf{X}_n(t)=(X_n(t),\dot X_n(t))$ be the solution of the isochronous equation~\eqref{iso} with same initial conditions of $\textbf{x}_n(t)$ at $t=0$; that is, $X_n(0)=x_n(0)$ and $\dot X_n(0)=\dot x_n(0)$. 

First, the difference function $\textbf{y}_n(t)=\textbf{x}_n(t)-\textbf{X}_n(t)$ is a solution of the equation~\eqref{lin_eq} with
\[
a(t)=\int_0^1 V''\bigl((1-\lambda)x_n(t)+\lambda X_n(t)\bigr)d\lambda, \text{ and }\ b(t)=\epsilon_n p(t).
\]
We notice at this point that $\textbf{y}_n(t)$ is well-defined for $t\in[0,2\pi]$. Moreover, the solution $\textbf{X}_n(t)$ is contained in a compact subset $\tilde{\mathcal{K}}\subset \mathscr{P}$ which can be assumed to satisfy $\mathcal{K}\subset\tilde{\mathcal{K}}$. Thus, since the projection over the $x$-axis of $\mathscr P$ is $(\alpha,\beta)$, the convex combination $(1-\lambda)x_n(t)+\lambda X_n(t)$ is contained inside a compact interval $\tilde{I}\subset (\alpha,\beta)$ (the projection of $\tilde{\mathcal{K}}$ over the $x$-axis.) The regularity of the function $V$ on $\tilde{I}$ implies the existence of the value $A\!:=\sup_{x\in\tilde{I}}|V''(x)|.$ In consequence, we deduce from property~(b) in Lemma~\ref{lin_eq} that
\begin{equation}\label{ineq1}
\|y_n\|_{L^{\infty}(0,2\pi)}\leq C^2|\epsilon_n|\|p\|_{L^1(0,2\pi)}.
\end{equation}

Second, the function $\textbf{y}_n(t)$ can also be interpreted as a $2\pi$-periodic solution of the periodic lineal equation
\[
\ddot y + V''(X_n(t))y=\epsilon_n p(t)-q_n(t),
\]
with
\begin{equation}\label{equal1}
q_n(t)=y_n(t)\int_0^1 \bigl[ V''\bigl((1-\lambda)x_n(t)+\lambda X_n(t)\bigr)-V''\bigl(X_n(t)\bigr)\bigr]d\lambda.
\end{equation}
Every solution of the isochronous equation~\eqref{iso} is of the form $\phi(t-\theta,r)$. In particular, the function $X_n(t)=\phi(t-\theta_n,r_n)$ for some $\theta_n\in[0,2\pi]$ and $r_n\geq 0$. Let us denote by $\psi(t-\theta_n,r_n)$ the $2\pi$-periodic complex-valued solution of the homogeneous equation $\ddot y + V''(X_n(t))t=0$. The Fredholm alternative in this case implies that
\[
\epsilon_n\int_0^{2\pi} p(t)\psi(t-\theta_n,r_n)dt - \int_0^{2\pi} q_n(t)\psi(t-\theta_n,r_n)dt =0.
\]
Or, equivalently,
\[
\Phi_p(\theta_n,r_n)=\frac{1}{2\pi}\int_0^{2\pi} \frac{q_n(t)}{\epsilon_n}\psi(t-\theta_n,r_n)dt.
\]
From the uniform continuity of $V''$ in $\tilde{I}$ and inequality~\eqref{ineq1} we have that $\frac{1}{\epsilon_n}q_n(t)\rightarrow 0$ uniformly as $n\rightarrow+\infty$. In addition, from property~(a) in Lemma~\ref{lin_eq} we have that $\|\psi(\cdot,r_n)\|_{L^{\infty}(0,2\pi)}\leq C$. Consequently, $\Phi_p(\theta_n,r_n)\rightarrow 0$ uniformly as $n\rightarrow+\infty$. This last limit contradicts the resonance condition~\eqref{rc} and proves the veracity of the result.
\end{proof}

\subsection{Resonance on compact subsets}

Let us consider the Poincaré maps
\[
P_{\epsilon}:\mathcal{K}\rightarrow(\alpha,+\infty)\times\R
\]
defined by $P_{\epsilon}(\textbf{x})=\varphi_{\epsilon}(2\pi,\textbf{x})$.  We recall that there exists $\epsilon^{**}>0$ such that if $|\epsilon|<\epsilon^{**}$ then all solutions of~\eqref{per} with initial condition inside $\mathcal{K}$ are well-defined for $t\in[0,2\pi]$. 

In order to prove Theorem~\ref{teorema} we proceed by contradiction and assume that for each $|\epsilon|<\epsilon^{**}$ there exists a ball of initial conditions $B\subset\mathcal{K}$ with diameter $d$ that remains inside $\mathcal{K}$ under the positive flow of equation~\eqref{per}. That is, $\varphi_{\epsilon}(t,B)\subset\mathcal{K}$ for all $t>0$. In particular, $P_{\epsilon}^n(B)\subset \mathcal{K}$ for all $n\geq 1$. Since the Poincaré map is uniformly continuous on the compact $K$ and the diameter of the ball is fixed, there exists $0<\epsilon^*<\epsilon^{**}$ such that if $|\epsilon|<\epsilon^*$ two successive iterates of $B$ intersect for all $n\geq 0$. In particular, the union of all the Poincaré iterates of $B$,
\[
\mathcal B_{\epsilon}\!:=\cup_{n\geq 0}P_{\epsilon}^n(B),
\]
is an open connected subset of $\mathcal{K}$ which is positively invariant under the map $P_{\epsilon}$. That is, $P_{\epsilon}(\mathcal B_{\epsilon})\subset \mathcal B_{\epsilon}$. Intuitively, positively invariant open sets with good topological properties are invariant under a measure-preserving flow. In~\cite[Proposition~3]{Ortega2006} the author shows that this is true when taking the set $\text{int}(\overline{\mathcal{B}_{\epsilon}})$. That is $P_{\epsilon}(\text{int}(\overline{\mathcal{B}_{\epsilon}}))=\text{int}(\overline{\mathcal{B}_{\epsilon}})$. The open set $\text{int}(\overline{\mathcal{B}_{\epsilon}})$ is then connected and invariant under $P_{\epsilon}$ but it is not necessarily simply connected. To overcome this obstruction, we can produce a new open simply connected subset $\hat{\mathcal{B}_{\epsilon}}$ formed by $\text{int}(\overline{\mathcal{B}_{\epsilon}})$ and its possible holes filled. We refer to \cite[Section~4.6]{Ortega2019} for further details of this construction.

Then $\hat{\mathcal{B}_{\epsilon}}$ is an open simply connected invariant subset under the map $P_{\epsilon}$. Moreover, its measure is finite since $\hat{\mathcal{B}_{\epsilon}}\subset K$. At this point we use Montgomery's theorem to deduce the existence of a fixed point of $P_{\epsilon}$ in $\hat{\mathcal{B}_{\epsilon}}$. Clearly, such fixed point corresponds to a $2\pi$-periodic orbit of equation~\eqref{per}  for each $|\epsilon|<\epsilon^*$ contained in $K$. The existence of such periodic orbits contradicts the statement of Proposition~\ref{proposition} and so proves the veracity of Theorem~\ref{teorema}.

\section{Construction of bounded isochronous centers}\label{sec:4}

Isochronous potentials were locally characterized by Urabe in~\cite{Urabe61,Urabe62}. The term local means that the description was valid in a neighbourhood of the center. In~\cite{OrtRoj2018} the authors showed that Urabe's theory is extendible to the whole period annulus. In this section we illustrate how isochronous centers with bounded period annulus may be constructed using Urabe's characterization. In particular, we show that such oscillators are not strange in the family of isochronous potentials.

An isochronous potential oscillator with bounded period annulus is obtained if the corresponding potential function has an integrable singularity (or weak singularity.) Let us consider a potential $V\in C^2(I)$ defined in an interval $I=(\alpha,\beta)$ satisfying the assumptions in Section~\ref{sec:2}. That is, $V$ has an integrable singularity at $x=\alpha$ and defines a center with bounded period annulus. The Urabe class $\mathcal{U}(I)$ is defined as the set of functions $u:I\rightarrow\R$ satisfying $u(0)=0$, $u\in C(I)$ and $v\in C^1(I)$ where $v(x)=xu(x)$. Urabe's theory states that all solutions of the Cauchy problem
\[
\ddot{x}+V'(x)=0,\ x(0)=x_0,\ \dot x(0)=0,
\]
with initial conditions $x_0\in I$, $x_0\neq 0$, are periodic of a fixed minimal period $T$ if and only if there exists an odd function $S\in \mathcal{U}(J)$ with $J=(-(2\overline{V})^{\frac{1}{2}},+(2\overline{V})^{\frac{1}{2}})$ and $|S(X)|<1$ if $X\in J$, such that the solution $X(x)$ of
\begin{equation}\label{urabe}
\frac{dX}{dx}=\frac{2\pi}{T}\frac{1}{1+S(X)},\ X(0)=0,
\end{equation}
is defined on the interval $I$ and it satisfies $V(x)=\frac{1}{2}X(x)^2$. Here $\overline{V}$ is the maximum energy level of the center.

From the previous characterization, a potential function satisfying the hypothesis in Section~\ref{sec:2} can be constructed by choosing a function $S\in\mathcal{U}(I)$ satisfying $S(A)+1=0$ for some $A<0$ (in such a case, $A=-(2\overline{V})^{\frac{1}{2}}$.) Indeed, deriving the relation between $V$ and $X$ and using the differential equation~\eqref{urabe} we have
\[
V'(x)=\frac{2\pi}{T}\frac{X(x)}{1+S(X(x))}.
\]

The rest of this section is dedicated to show two examples of bounded isochronous potentials constructed using the previous characterization by Urabe. In any case, it is not restrictive to fix $T=2\pi$. The first example can be interpreted as the simplest isochronous center with bounded period annulus. That is, the one created by the choice $S(X)=X$. In this case, equation~\eqref{urabe} is explicitly integrable and $X(x)=\sqrt{2x+1}-1$. The resultant potential is given by
\[
V(x)=1+x-\sqrt{2x+1}.
\]
This isochronous center has appeared before in the literature. See for instance \cite{ManRojVil2017} where the authors study the period function of oscillators $\ddot{x}+x^p-x^q=0$, $p,q\in\R$. The second example is the potential given by taking $S(X)=\sin(X)$. Here equation~\eqref{urabe} is also integrable but the solution is given implicitly by the equation
\[
X(x)-\cos(X(x))=x-1.
\]
We point out the similitude of this solution with the Kepler equation
\[
M=E-e\sin(E),
\]
for the limiting case $e=1$ (see for instance~\cite{Pollard}.)

\section{Resonance condition for the explicit example}\label{sec:5}

In the previous section we have shown that the potential
\begin{equation}\label{potential_example}
V(x)=1+x-\sqrt{2x+1}
\end{equation}
generates an isochronous oscillator with bounded period annulus. It is a computation to check that the projection of the period annulus is $I=(-1/2,3/2)$ and the maximum energy level is given by $\overline{V}=1/2$. This section is devoted to the proof of Corollary~\ref{corollary}. To do so, we first need to compute the functions $\phi(t,r)$ and $\psi(t,r)$. 

On account of Urabe's theory introduced in the previous section, the change of coordinates $\{X=X(x),Y=y\}$ transforms the system
\[
\begin{cases}
\dot{x}=y,\\
\dot{y}=-V'(x),
\end{cases}
\]
into 
\[
\begin{cases}
\dot{X}=\frac{H(X)}{X}Y,\\
\dot{y}=-H(X),
\end{cases}
\]
where $H=V'\circ x$ and $x=x(X)$ is the inverse function of $X=X(x)$. It is easy to check that $X^2+Y^2$ is a first integral so periodic orbits in these new variables are circular. Thus, using polar coordinates the previous system finally writes 
\[
\begin{cases}
\dot{r}=0,\\
\dot{\theta}=-\omega(r\cos\theta),
\end{cases}
\]
where $\omega(X)=\frac{H(X)}{X}$. In the particular case of the equation with potential~\eqref{potential_example} we have $X(x)=\sqrt{2x+1}-1$, $x(X)=\frac{1}{2}X^2+X$ and $\omega(X)=\frac{1}{1+X}$, so
\begin{equation}\label{dtheta}
\dot{\theta}=-\frac{1}{1+r_0\cos\theta}.
\end{equation}
We point out that the equation above is well-defined for $r_0\in[0,1)$, which corresponds to initial conditions $x_0\in I$. For the sake of simplicity, we shall write $r=r_0$ from now on. Taking as initial condition for the angle $\theta_0=0$ (in correspondence with $\dot{x}(0)=0$) the solution of the equation above satisfies the identity
\begin{equation}\label{theta}
\theta(t,r)+r\sin(\theta(t,r))=-t.
\end{equation}
In consequence, the solution of the isochronous system~\eqref{iso} with potential function in~\eqref{potential_example} is given by
\[
\phi(t,r)=\left(\frac{1}{2}X^2+X\right)\!\!\bigl(r\cos(\theta(t,r))\bigr)=\frac{r^2}{2}\cos^2(\theta(t,r))+r\cos(\theta(t,r)).
\]
From the previous expression and identity~\eqref{theta} it is now a computation to check that
\begin{equation}\label{psi}
\psi(t,r)=\frac{\partial\phi}{\partial r}(t,r)-\frac{1}{V'(x_0)}\dot{\phi}(t,r)i=r+\cos(\theta(t,r))-(r+1)\sin(\theta(t,r))i,
\end{equation}
where $x_0=\frac{1}{2}r^2+r$. Now we are in position to prove the result.

\begin{proof}[Proof of Corollary~\ref{corollary}]
The function $\psi(t,r)$ has a Fourier expansion of the type
\[
\psi(t,r)=\sum_{m=-\infty}^{+\infty}c_m(r)e^{imt}.
\]
From identity~\eqref{theta} we have that $\theta(-t,r)=-\theta(t,r)$. Consequently, the identity
$\psi(-t,r)=\overline{\psi(t,r)}$ holds and so the functions $c_0(r)$, $c_1(r)$ and $c_{-1}(r)$ are real. We first show some bounds of these first coefficients. Let us define
\[
d_{+}(r)=\frac{1}{2}(c_1(r)+c_{-1}(r))=\frac{1}{2\pi}\int_{0}^{2\pi}\text{Re}\psi(t,r)\cos(t)dt,
\]
and
\[
d_{-}(r)=\frac{1}{2}(c_1(r)-c_{-1}(r))=\frac{1}{2\pi}\int_{0}^{2\pi}\text{Im}\psi(t,r)\sin(t)dt.
\]
From the expression in~\eqref{psi} we have that
\[
d_{+}(r)=\frac{1}{2\pi}\int_{0}^{2\pi}\bigl(r+\cos(\theta(t,r))\bigr)\cos(t)dt.
\]
Let us consider the change of variables $\theta=\theta(t,r)$. We invoke equality~\eqref{dtheta} to write
\begin{align*}
d_{+}(r)&=\frac{1}{2\pi}\int_{0}^{2\pi}\cos(\theta+r\sin\theta)(r\cos\theta+1)(r+\cos\theta)d\theta\\
&=\frac{1}{2\pi}\int_{0}^{2\pi}\sin(\theta+r\sin\theta)\sin\theta d\theta.
\end{align*}
Here in the last equality we used integration by parts. This last integral can be expressed in terms of Bessel functions of first kind (see~\cite{AbrSte65}), which allows to finally write
\[
d_{+}(r)=\frac{1}{r}J_1(r)-J_2(r).
\]
Using the series expansion of the previous functions we find that
$\lim_{r\rightarrow 0} d_+(r) = \frac{1}{2}$ and $d_+(r) > 0$ for all $r\in[0,1]$. The derivative writes
\[
\frac{d}{dr}\left(\frac{1}{r}J_1(r)-J_2(r)\right)=\frac{(2-r^2)J_1(r)}{r^2}-\frac{J_0(r)}{r},
\]
and using again series expansion it is easy to check that is zero at $r=0$ and negative for $r\in(0,1]$. Consequently, $d_+(r)$ is decreasing from $r=0$ to $r=1$ and so the following bounds hold for $r\in[0,1)$,
\begin{equation}\label{bound1}
0< J_1(1)-J_2(1)\leq d_{+}(r)\leq \frac{1}{2}.
\end{equation}
A similar argument shows that
\[
d_{-}(r)=-\frac{1}{r}J_1(r),
\]
and
\[
c_0(r)=\frac{1}{2\pi}\int_0^{2\pi}\text{Re}\psi(t,r)dt=\frac{3}{2}r.
\]
In consequence,
\begin{equation}\label{bound2}
J_1(1)\leq |d_{-}(r)|\leq \frac{1}{2}, \text{ and } 0\leq c_0(r)\leq\frac{3}{2}
\end{equation}
for $r\in[0,1)$.

Now we are in condition to prove the result. From the definition of $\Phi_p(\theta,r)$,
\[
2\pi\Phi_p(\theta,r)=a_0 c_0(r)+d_+(r)(a_1\cos(\theta)-b_1\sin(\theta))+d_-(r)(b_1\cos(\theta)+a_1\sin(\theta))i.
\]
The right-hand side of the equality above can be identified to the vector $w+DR(\theta)v$ in $\R^2$, where
\[
v=\left(\begin{matrix}
a_1\\
b_1
\end{matrix}\right), \ 
w=\left(\begin{matrix}
a_0 c_0(r)\\
0
\end{matrix}\right), 
D=\left(\begin{matrix}
d_+(r) & 0\\
0 & d_-(r)
\end{matrix}\right),
\]
and $R(\theta)$ stands for the rotation matrix in the counter-clockwise sense. We invoke inequalities~\eqref{bound1} and~\eqref{bound2} to deduce that
\[
|2\pi\Phi_p(\theta,r)|=|w+DR(\theta)v|=|D(D^{-1}w+R(\theta)v)|\geq (J_1(1)-J_2(1))|D^{-1}w+R(\theta)v|,
\]
and
\[
|D^{-1}w+R(\theta)v|\geq |v|-|D^{-1}w|=\!\sqrt{a_1^2+b_1^2}-\frac{c_0(r)|a_0|}{d_+(r)}\geq \sqrt{a_1^2+b_1^2}-\frac{3|a_0|}{2(J_1(1)-J_2(1))}.
\]
Finally, condition~\eqref{rc_example} and inequality~\eqref{bound1} ensures that
\[
|\Phi_p(\theta,r)|\geq \frac{1}{2\pi}(J_1(1)-J_2(1))\left(\sqrt{a_1^2+b_1^2}-\frac{3|a_0|}{2(J_1(1)-J_2(1))}\right)>0
\]
and so resonance occurs applying Theorem~\ref{teorema}.
\end{proof}

%\section{Comments}\label{sec:6}

%\textbf{Comentarios sobre qué pasa en la frontera del anillo de períodos. KAM?}

\section*{Acknowledgements}
The author want to thank Rafael Ortega for fruitful discussions and valuable comments during the development of the manuscript.

This work has been realized thanks to the \emph{Agencia Estatal de Investigación} and \emph{Ministerio de Ciencia, Innovación y Universidades} grants MTM2017-82348-C2-1-P and MTM2017-86795-C3-1-P.
 
\bibliographystyle{abbrv}
%\bibliography{biblio}

\end{document}